\newtheorem{theorem}{Theorem}[subsection]
\newtheorem{lemma}[theorem]{Lemma}
\newtheorem{corollary}[theorem]{Corollary}
\newtheorem{example}[theorem]{Example}
\newtheorem{problem}[theorem]{Problem}
\newtheorem{definition}{Definition}
\newcommand{\rr}{\mathds{R}}
\newcommand{\ff}{\mathcal{F}}
\newcommand{\ww}{\mathcal{W}}
\newcommand{\aff}[1]{\mathcal{W}_{\operatorname{aff}}(#1)}
\newcommand{\iso}[1]{\mathcal{W}_{\operatorname{iso}}(#1)}
\title{Isometric and affine copies of a set in volumetric Helly results}
\author[Messina]{John A. Messina}
\address{New York University, New York, NY 10003}
\email{jam1535@nyu.edu}
\author[Sober\'on]{Pablo Sober\'on}\address{Baruch College, City University of New York, One Bernard Baruch Way, New York, NY 10010} 
\email{pablo.soberon-bravo@baruch.cuny.edu}
\thanks{This research project was done as part of the 2020 Baruch Discrete Mathematics REU, supported by NSF awards DMS-1802059, DMS-1851420, and DMS-1953141.}
\begin{document}

\begin{abstract}
We show that for any compact convex set $K$ in $\mathbb{R}^d$ and any finite family $\mathcal{F}$ of convex sets in $\mathbb{R}^d$, if the intersection of every sufficiently small subfamily of $\mathcal{F}$ contains an isometric copy of $K$ of volume $1$, then the intersection of the whole family contains an isometric copy of $K$ scaled by a factor of $(1-\varepsilon)$, where $\varepsilon$ is positive and fixed in advance.  Unless $K$ is very similar to a disk, the shrinking factor is unavoidable.  We prove similar results for affine copies of $K$.  We show how our results imply the existence of randomized algorithms that approximate the largest copy of $K$ that fits inside a given polytope $P$ whose expected runtime is linear on the number of facets of $P$. 
\end{abstract}

\maketitle

\section{Introduction}

Helly's theorem is a central result in combinatorial geometry \cites{Radon:1921vh, Helly:1923wr}.  It says that \textit{for any finite family $\ff$ of convex sets in $\rr^d$, if every $d+1$ or fewer sets from $\ff$ have a nonempty intersection, then $\ff$ has a nonempty intersection.}  This theorem has many extensions and applications in discrete geometry, topological combinatorics, and computational geometry (see, e.g., \cites{Amenta:2017ed, Holmsen:2017uf, DeLoera:2019jb} and the references therein).

 In the quantitative versions of Helly's theorem, we aim to characterize finite families of convex sets whose intersection is quantifiably large rather than simply nonempty. B\'ar\'any, Katchalski, and Pach started this direction of research when they proved a volumetric version of Helly's theorem \cites{Barany:1982ga, Barany:1984ed}.  They showed that \textit{if the intersection of any $2d$ or fewer elements of a finite family of convex sets in $\rr^d$ has volume greater than or equal to one, the intersection of the whole family must be nonempty and have volume greater than or equal to $d^{-2d^2}$.}

The guarantee on the volume of the intersection is smaller than the bound we ask in the $2d$-tuples.  Even though this volume loss has been reduced significantly \cites{Naszodi:2016he, Brazitikos:2017ts}, it is unavoidable even if we are willing to check much larger subfamilies \cite{DeLoera:2017gt}.  A way to obtain \emph{exact} quantitative Helly-type theorems, in which no such loss is present, is to impose additional conditions.

Given a family $\mathcal{W}$ of sets in $\rr^d$, we can ask if there exists a positive integer $n$ such that \textit{for any finite family $\ff$ of convex sets in $\rr^d$, if the intersection of any $n$ or fewer of them contains a set of $\mathcal{W}$ of volume one, then $\bigcap \ff$ contains a set of $\mathcal{W}$ of volume one.}   Sarkar, Xue, and Sober\'on recently showed that such a result holds for various families $\mathcal{W}$, including the family of all axis-parallel boxes or the family of all ellipsoids \cite{Sarkar:2019tp}.  We say that $\mathcal{W}$ acts as a family of witness sets.  If such a statement holds, we say that $\mathcal{W}$ admits an exact Helly theorem for the volume.

  In this manuscript, we explore whether certain new families $\mathcal{W}$ admit an exact Helly theorem for the volume.  Except for the result for ellipsoids, the families considered in \cite{Sarkar:2019tp} fix the orientation of the sets of $\mathcal{W}$.  We are interested in families of witness sets where the orientation is not fixed.  Given a convex set $K$ in $\rr^d$, we consider the families
\begin{align*}
	\aff{K} & = \{a+\alpha AK : A \in SL(d), \alpha \in \rr, a \in \rr^d\} \qquad \mbox{and}\\ 
	\iso{K} & = \{a+\alpha AK : A \in O(d), \alpha \in \rr, a \in \rr^d \}
\end{align*}

\noindent of affine or scaled isometric copies of $K$.  For scaled isometric copies, we show that unless $K$ has a sufficiently large intersection with the boundary of the minimum volume ball containing it, $\iso{K}$ does not admit an exact Helly theorem for the volume.  For affine copies, a similar statement holds for the minimum volume enclosing ellipsoid of $K$.  For a compact set $K \subset \rr^d,$ let $B(K)$ be the minimum volume ball such that $K \subset B(K)$.  We denote by $\partial B(K)$ the boundary of $B(K)$.

\begin{theorem}\label{thm:isometric-no}
	Let $K$ be a compact set in $\rr^d$.  If there is a closed half-sphere $D \subset \partial B(K)$ such that $K \cap D$ has measure $0$ under the Haar measure of $\partial B(K)$, then $\iso{K}$ does not admit  an exact Helly theorem for the volume.
\end{theorem}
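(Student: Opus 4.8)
The plan is to show that $\iso{K}$ fails the defining property for \emph{every} admissible $n$ by exhibiting, for each $n$, a finite family $\ff_n$ of convex sets such that every subfamily of size at most $n$ has an intersection containing a volume-one member of $\iso{K}$, while $\bigcap \ff_n$ contains no volume-one copy (though, consistently with the positive theorem, it will contain copies scaled by $1-\varepsilon$). First I would normalize so that $K$ has volume $1$ and $B(K)=R\cdot B^d$ is centered at the origin, with $S:=K\cap\partial B(K)$ the contact set; recall the classical fact that $0\in\conv(S)$, and orient coordinates so that the half-sphere $D$ of the hypothesis is the lower one, i.e. $S$ lies (up to a null set) in $\{x_d\ge 0\}$. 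Two observations drive the argument. A volume-one element of $\iso{K}$ is a rigid motion $a+AK$ with circumscribed ball $a+R\cdot B^d$; hence if such a copy lies in a body whose own circumscribed ball is $R\cdot B^d$, uniqueness of the minimal enclosing ball forces $a=0$ and places the contact set $AS$ on $\partial B(K)$. Second, since $\{v:h_K(v)=R\}$ equals the set of unit contact directions $\widehat S=S/R$, the one-sided hypothesis yields an entire cap (indeed a half-sphere) of directions $v$ in which $K$ is strictly interior to $R\cdot B^d$, along which the ball may be ``shaved'' without losing $K$.

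To build $\ff_n$ I would take the circumscribed ball $R\cdot B^d$, presented as a large family of tangent half-spaces, together with a fine net of \emph{shallow} shaving half-spaces $\{\langle x,v\rangle\le R-\eta\}$ with $v$ ranging over a net in the half-sphere $D$ and $\eta$ of order $\varepsilon$. With the full family present, the fitting criterion from the previous paragraph becomes a transversality condition: a full copy $AK$ survives only if the rotated contact directions $A\widehat S$ avoid every active shaving direction, and I would arrange the net so that no rotation achieves this, thereby forbidding all volume-one copies while, because the shaves are only $O(\varepsilon)$ deep, still admitting every $(1-\varepsilon)$-copy. For a subfamily of size at most $n$, only finitely many shaves are active; here the point is to reorient the copy so that the trace of its contact set on $D$ slips past those finitely many directions. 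This is exactly where the hypothesis $S\cap D$ null is used: because the mass of $\widehat S$ sits in the complementary half-sphere $D^c$ while $\widehat S\cap D$ is negligible, one should be able to perturb the rotation so that $AK$ keeps its essential contact on $D^c$ (where nothing is shaved) yet its contact on $D$ dodges the finitely many active directions, leaving room to restore the full volume. The parameter $\varepsilon$ is then tuned so that ``volume exactly one'' is excluded from $\bigcap\ff_n$ but ``volume $(1-\varepsilon)$'' is not.

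The hard part will be reconciling the two opposing requirements: to block \emph{all} full copies, the shaving net on $D$ must be a transversal that meets every rotated contact set, yet to fit a full copy in \emph{every} $\le n$-subfamily, each choice of at most $n$ active shaves must be dodgeable by reorientation. These tensions are genuinely in conflict — a crude net that is transversal will typically contain small dodging-obstructing subsets — so the construction must be engineered so that the transversality is achieved \emph{collectively} by many shaves, each individually removable, rather than by any bounded sub-collection. I expect the crux to be a quantitative dodging lemma asserting that a contact set which is Haar-null on $D$ can be rotated off any prescribed finite subset of $D$ while its positive-measure contact on $D^c$ is preserved; proving this rigorously will likely require a genericity/compactness argument (the bad rotations for a single shaving direction form a null set in $O(d)$, and one must control their finite union uniformly) together with careful bookkeeping of the $O(\varepsilon)$ depths to guarantee that the recovered copy truly has volume one and not merely $1-\varepsilon$. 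Verifying that the resulting $\bigcap\ff_n$ nonetheless admits the $(1-\varepsilon)$-copy, matching the positive result, is a comparatively routine check once the net is in place.
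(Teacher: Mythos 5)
Your architecture is genuinely different from the paper's (you shave the circumscribed ball and hunt for volume-one copies directly; the paper intersects bodies tangent to $B_d$ and uses copies scaled \emph{up} by $1+\delta_2$ as witnesses), but the step you yourself flag as the crux --- the ``quantitative dodging lemma'' --- fails as you have set it up, and the failure is structural. To block every full copy, your shaving caps must cover $D$, so each cap has positive angular radius; for the $\le n$-subfamily step you then want a rotation $A$ with $AS$ disjoint from $n$ prescribed caps. But the set of rotations for which $AS$ meets a fixed positive-radius cap $C$ is not a null subset of $O(d)$ (already for a single contact point it has measure $\mu(C)>0$), and when $S\cap D$ is an uncountable null set no union bound applies. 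Concretely, let $K$ be the half-ball $\{x\in B_d: x_d\ge 0\}$: then $B(K)=B_d$, the contact set $S=K\cap S^{d-1}$ is a closed half-sphere, and $K\cap D$ is the equator for $D=\{x\in S^{d-1}:x_d\le 0\}$, so the hypothesis of Theorem \ref{thm:isometric-no} holds. Yet $AS$ is always a closed half-sphere, whose complement is an open half-sphere and therefore cannot contain two caps centered at antipodal equatorial points of $D$; no rotation dodges even two such shaves. The repair is to reverse the roles of the two players and to add a translation: the finitely many objects to be dodged must be the contact \emph{points} $x_1,\ldots,x_n$ of the half-spaces with the ball, the dodging object must be the small-\emph{measure} set $K\cap D(u)_{\varepsilon_1}\cap S^{d-1}$ (measure $<1/n$), so that $\sum_i \mathds{P}[x_i\in AK_{\varepsilon_1}]<1$, and after rotating one must translate $K$ in the direction $u$ before any volume can be recovered. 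That is exactly Lemmas \ref{lem:rotation} and \ref{lem:almost-optimal}, and once you have them the shaving is unnecessary: every $n$-wise intersection of tangent bodies already contains a copy of $(1+\delta_2)K$, while their total intersection $B_d$ does not, which gives the theorem after rescaling.

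There is a second gap independent of the first. Your blocking argument needs the total intersection to have minimal enclosing ball exactly $R B^d$, so that uniqueness of the circumscribed ball forces $a=0$; this requires \emph{all} tangent half-spaces, i.e.\ an infinite family, whereas the definition of the Helly number quantifies over finite families. Truncating to finitely many tangent half-spaces enlarges the intersection and destroys the $a=0$ step, and it is not clear the shaved polytope still excludes all volume-one copies. The paper handles this by working with the infinite family of tangent simplices $M_u$ and then extracting a finite subfamily through a compactness argument on the parameter space $\rr^d\times\rr\times O(d)$: the sets $S(M_u)$ of parameters of admissible copies are compact with empty total intersection, hence some finite subcollection already has empty intersection. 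You would need an analogous extraction. Finally, the requirement that $\bigcap\ff_n$ contain a $(1-\varepsilon)$-scaled copy is not part of the statement and the effort spent tuning $\varepsilon$ for it can be dropped.
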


\begin{theorem}\label{thm:affine-no}
	Let $K$ be a compact set in $\rr^d$ such that $B(K)$ is also the minimum volume ellipsoid containing $K$.  If there is a closed half-sphere $D \subset \partial B(K)$ such that $K \cap D$ has measure $0$ under the Haar measure of $\partial B(K)$, then $\aff{K}$ does not admit  an exact Helly theorem for the volume.
\end{theorem}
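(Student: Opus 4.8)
The plan is to reduce the affine statement to the isometric one of Theorem~\ref{thm:isometric-no} by pinning down the minimum-volume enclosing ellipsoid, with the extra hypothesis $B(K)=$ (minimum ellipsoid) doing exactly the work of identifying the two thresholds. First I would normalize. Since $\aff{K}$ is invariant under scalings and volume-preserving affine maps, and since all three hypotheses (being the minimum-volume ball, being the minimum-volume ellipsoid, and the half-sphere condition) are preserved by such maps, I would rescale so that $\operatorname{vol}(K)=1$ and then apply a volume-preserving affine map so that $B:=B(K)$ is a Euclidean ball centered at the origin that is simultaneously the minimum enclosing ball \emph{and} the minimum enclosing ellipsoid of $K$. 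Set $V_0:=\operatorname{vol}(B)\ge 1$, and for a bounded set $X$ let $E(X)$ denote the (unique, by John--L\"owner) minimum-volume ellipsoid containing $X$.

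The heart of the argument is a threshold lemma. By affine equivariance of $E(\cdot)$ and the hypothesis $E(K)=B$, every volume-$1$ affine copy $K'=a+\alpha AK$ with $A\in SL(d)$ satisfies $E(K')=a+\alpha AB$, whose volume is $|\alpha|^{d}V_0=V_0$ because $|\alpha|^{d}\operatorname{vol}(K)=1$. Hence, if a convex body $C$ contains such a $K'$, then every ellipsoid containing $C$ also contains $K'$, so $\operatorname{vol}E(C)\ge V_0$. The contrapositive is the tool I want: \emph{if $\operatorname{vol}E(C)<V_0$ then $C$ contains no volume-$1$ affine copy of $K$.} Moreover, since balls are ellipsoids, $\operatorname{vol}E(C)$ is at most the volume of the minimum enclosing ball of $C$; therefore it suffices to force the full intersection of the constructed family to have minimum enclosing \emph{ball} of volume strictly below $V_0$.

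With this in hand I would, for each $n$, reuse the family $\ff$ produced in the proof of Theorem~\ref{thm:isometric-no} for the same $K$ and the same half-sphere $D$. That construction gives a family whose $\le n$-element subfamily intersections each contain a volume-$1$ isometric copy of $K$, while its full intersection $C=\bigcap\ff$ is a body whose minimum enclosing ball has volume $<V_0$. The local condition transfers verbatim, since isometric copies are affine copies ($O(d)\subset SL(d)$): every $\le n$-element subfamily intersection contains a volume-$1$ affine copy of $K$. The global failure upgrades through the threshold lemma: from $\operatorname{vol}(\text{minimum ball of }C)<V_0$ we get $\operatorname{vol}E(C)<V_0$, so $C$ contains no volume-$1$ affine copy of $K$. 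As $n$ was arbitrary, $\aff{K}$ does not admit an exact Helly theorem for the volume.

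The main obstacle is precisely the passage from ``no isometric copy'' to ``no affine copy'' in the full intersection: a priori the larger class $\aff{K}$ could slip a sheared, volume-preserving copy into $C$ that no scaled isometric copy can reach. What rules this out is the hypothesis $B(K)=E(K)$, which forces the affine threshold $V_0$ (a minimum-ellipsoid volume) to coincide with the ball threshold $\operatorname{vol}B(K)$ governing the isometric construction; combined with the monotonicity $\operatorname{vol}E(C)\le\operatorname{vol}(\text{minimum ball of }C)$, this closes the gap. The one point I would examine carefully is that the construction inherited from Theorem~\ref{thm:isometric-no} certifies non-containment for its full intersection through the size of the minimum enclosing ball, rather than through an orientation obstruction special to rotations. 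If the original certificate is of the latter type, I would instead re-run the construction so that $C$ is explicitly a body of minimum-enclosing-ball volume below $V_0$ (the half-sphere hypothesis is what leaves the room to carve $C$ down on the $D$-side while keeping the subfamily intersections large), and then invoke the threshold lemma exactly as above.
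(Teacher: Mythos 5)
Your central reduction is exactly the paper's: the uniqueness and affine equivariance of the minimum-volume enclosing ellipsoid, together with the hypothesis $B(K)=E(K)$, force every affine copy of $K$ of volume $\operatorname{vol}(K)$ to have minimum enclosing ellipsoid of volume $\operatorname{vol}(B(K))$, so a convex body whose minimum enclosing ellipsoid is smaller cannot contain such a copy. The paper phrases the same fact as ``the largest volume of an affine copy of $K$ contained in $B_d$ is $\operatorname{vol}(K)$,'' and, like you, it feeds the family of tangent bodies from the proof of Theorem \ref{thm:isometric-no} into this observation. So the key idea is the right one and coincides with the source.

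Two points need repair. First, your intermediate claim that the construction from Theorem \ref{thm:isometric-no} already yields a full intersection whose minimum enclosing ball has volume $<V_0$ is false as stated: that full intersection is $B_d$ itself, whose enclosing ball has volume exactly $V_0$. The certificate there is that $B_d$ contains no copy scaled by a factor exceeding $1$; equivalently, the comparison is against copies of volume $(1+\delta_2)^d\operatorname{vol}(K)$ rather than against volume $1$. Your fallback (rescale so the tangent bodies touch $(1+\delta_2)^{-1}B_d$) does fix this, and is just a renormalization of what the paper does. Second, and more substantively, you never pass to a \emph{finite} family, which the definition of $h(\cdot)$ requires. Your threshold lemma certifies non-containment only for the full intersection of the \emph{infinite} tangent family; the intersection of any finite subfamily is a strictly larger polytope whose minimum enclosing ellipsoid may well have volume $\ge V_0$, and the finite subfamily extracted in the isometric proof only rules out isometric placements, not sheared ones. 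The paper closes this gap with a compactness argument on the $SL(d)$-placement spaces $S(M_u)$, including the necessary observation that these are compact even though $SL(d)$ is not, because $M_u$ is bounded and $K$ has interior. Alternatively, within your normalization you could take finitely many tangent bodies whose intersection lies inside $r'B_d$ for some $r'<1$, to which your threshold lemma then applies directly. Either route works, but the step has to be supplied.
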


\begin{figure}
	\centerline{\includegraphics[scale=0.3]{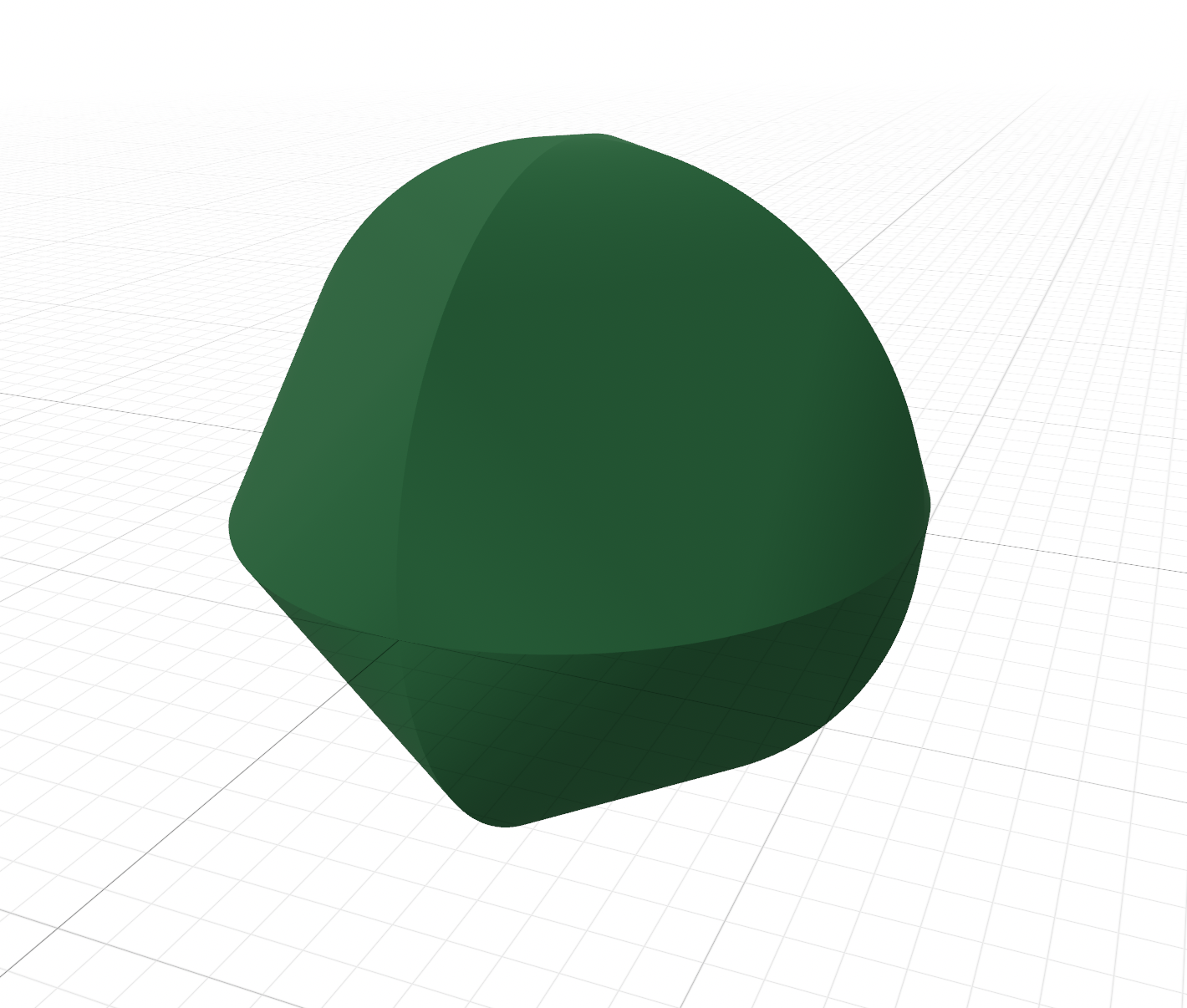}}
	\caption{Consider the three great circles in $S^2$ formed by the $xy$-plane, the $yz$-plane and the $xz$-plane.  Let $K$ be the convex hull of their union.  Theorems \ref{thm:isometric-no} and \ref{thm:affine-no} show that neither $\iso{K}$ or $\aff{K}$ admit an exact Helly theorem for the volume.}
	\label{fig:circles}
\end{figure}

For any bounded set $K$ in $\rr^d$, we may assume $B(K)$ is its minimum enclosing ellipsoid by applying a particular affine transformation, so no generality is lost with the condition of Theorem \ref{thm:affine-no}.  Theorem \ref{thm:isometric-no} implies that a  convex set $K$ must be very similar to a ball for $\iso{K}$ to admit an exact Helly theorem for the volume.  Theorem \ref{thm:affine-no} implies that a convex $K$ must be very similar to an ellipsoid for $\aff{K}$ to admit an exact Helly theorem for the volume.

  In many recent results regarding volumetric Helly-type theorems, analytic properties of ellipsoids are key ingredients of the proofs \cites{Naszodi:2016he, Brazitikos:2016ja, Brazitikos:2017ts, Brazitikos:2018uc, FernandezVidal:2020iw, Damasdi:2019vm}.  Results on the sparsification of John decompositions of the identity can be translated to Helly-type theorems.  Theorems \ref{thm:isometric-no} and \ref{thm:affine-no} show that the study of ellipsoids is much more intertwined with volumetric Helly-type theorems than previously thought.  The techniques we use to prove theorems \ref{thm:isometric-no} and \ref{thm:affine-no} involve the probabilistic method.  These methods can also be used to give a lower bound for Helly numbers for sets where those theorems fail to apply.  Theorems \ref{thm:isometric-no} and \ref{thm:affine-no} apply to any polytope $K$.  There are more general sets to which they apply, as Figure \ref{fig:circles} shows.

On a positive note, we show that if we accept a loss of $\varepsilon$ on the volume, we do have such Helly-type theorems.

\begin{theorem}\label{thm:epsilonapprox}
	Let $d$ be a positive integer and $\varepsilon >0$.   Let $K$ be a compact set with a nonempty interior.  There exists an integer $n=n(K,\varepsilon)$ such that the following statement holds.  If $\ff$ is a finite family of convex sets such that the intersection of any $n$ or fewer sets of $\ff$ contains a set of $\iso{K}$ of volume one, then $\cap \ff$ contains a set of $\iso{K}$ of volume $1-\varepsilon$.
\end{theorem}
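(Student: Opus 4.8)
The plan is to reduce the problem to finitely many ordinary (affine) Helly problems in $\rr^d$, one for each rotation in a fine net of $O(d)$, and then to glue them together by a double-counting argument. Since a copy $a+\alpha AK$ has volume $\alpha^d\operatorname{vol}(K)$ and every $A\in O(d)$ preserves volume, requiring a witness of volume $1$ fixes the scale at $\alpha_0=\operatorname{vol}(K)^{-1/d}$; thus the only free parameters of a volume-one witness are the rotation $A\in O(d)$ and the translation. Because $K$ has nonempty interior and is compact, after translating I may fix a point $p_0$ and set $L=K-p_0$ so that $B(0,r)\subseteq L\subseteq B(0,R)$ for some radii $0<r\le R$. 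This pair of radii is exactly what will let a small shrinking of a copy absorb a small rotation of it.

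The key geometric step is a containment lemma. Writing $s=(1-\varepsilon)^{1/d}$ and $\lambda=1-s$, I claim that if $Q\in O(d)$ satisfies $\|Q-I\|<\delta$ with $\delta\le \lambda r/R$, then $s\,QL\subseteq L$. Indeed, for $x\in L$ we have $sQx=sx+s(Q-I)x$ with $\|s(Q-I)x\|\le \delta R\le\lambda r$; and for any $z\in L$ and any $w$ with $\|w\|\le\lambda r$ we may write $w=\lambda u$ where $u\in B(0,r)\subseteq L$, so $sz+w=(1-\lambda)z+\lambda u\in L$ by convexity. Taking $z=x$ gives $sQx\in L$. Consequently, if a volume-one copy $b+\alpha_0AL\subseteq C$ for a convex set $C$ and $\|A-A_i\|<\delta$, then applying the lemma with $Q=A^{-1}A_i$ (orthogonal, with $\|Q-I\|=\|A_i-A\|<\delta$) yields $b+(1-\lambda)\alpha_0A_iL\subseteq b+\alpha_0AL\subseteq C$. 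In words: a witness whose rotation is $\delta$-close to $A_i$ can be replaced, \emph{keeping the reference point $b$}, by a copy of volume $(1-\lambda)^d=1-\varepsilon$ whose rotation is exactly $A_i$.

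Now fix a $\delta$-net $A_1,\dots,A_m$ of the compact group $O(d)$, and for each index $i$ and each $C\in\ff$ set $H_C^i=\{b\in\rr^d: b+(1-\lambda)\alpha_0A_iL\subseteq C\}$. Each $H_C^i$ is the erosion of the convex set $C$ by a fixed convex body, hence an intersection of translates of $C$ and thus convex; moreover a common point of $\{H_C^i\}_{C\in\ff}$ is precisely a volume-$(1-\varepsilon)$ witness of $\iso{K}$ (with rotation $A_i$) contained in $\bigcap\ff$. I would take $n=m(d+1)$. Suppose, for contradiction, that $\bigcap_{C\in\ff}H_C^i=\emptyset$ for every $i$. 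By Helly's theorem in $\rr^d$ there is, for each $i$, a subfamily $\mathcal{G}_i\subseteq\ff$ with $|\mathcal{G}_i|\le d+1$ and $\bigcap_{C\in\mathcal{G}_i}H_C^i=\emptyset$. The union $\mathcal{G}=\bigcup_{i=1}^m\mathcal{G}_i$ has at most $n$ members, so by hypothesis it admits a common volume-one copy $b+\alpha_0AL$; choosing a net point $A_j$ with $\|A-A_j\|<\delta$, the containment lemma gives $b\in\bigcap_{C\in\mathcal{G}}H_C^j\subseteq\bigcap_{C\in\mathcal{G}_j}H_C^j=\emptyset$, a contradiction. Hence some $\bigcap_{C\in\ff}H_C^i$ is nonempty, producing the required copy.

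I expect the containment lemma to be the heart of the matter: it is the only place where the nonempty interior of $K$ (through the inradius $r$) is essential, and it is what quantitatively couples the coarseness $\delta$ of the rotation net to the permitted volume loss $\varepsilon$, hence fixes the net size $m$ and the final bound $n=m(\varepsilon,K)(d+1)$. Everything else is bookkeeping: reducing the scale to the single value $\alpha_0$, checking that the erosions $H_C^i$ are convex, and performing the union-and-Helly double count. One should also verify the routine estimate $\delta\mapsto m$ (that a net of the stated size exists in $O(d)$), but this is standard compactness.
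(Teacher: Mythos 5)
Your proposal is correct and follows essentially the same route as the paper: discretize $O(d)$ into finitely many rotations so that every isometric copy of $K$ contains a slightly shrunk translate with one of those rotations (your containment lemma is a quantitative version of the paper's Lemma \ref{lem:few-directions}, which reaches the same conclusion by a soft open-cover compactness argument on $O(d)$), and then run Helly's theorem per fixed rotation via convex erosions together with the union double-count, which is exactly the paper's Lemma \ref{lem:additive} combined with Helly's theorem for translates. The one caveat, which you share with the paper (whose Lemma \ref{lem:few-directions} also assumes convexity while Theorem \ref{thm:epsilonapprox} is stated for arbitrary compact $K$ with nonempty interior), is that the convexity used in your containment lemma requires the standard reduction of replacing $K$ by $\operatorname{conv}(K)$ --- legitimate because the members of $\ff$ are convex, so they contain $a+\alpha AK$ if and only if they contain $a+\alpha A\operatorname{conv}(K)$, with volumes rescaled accordingly.
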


The theorem above has consequences in computational geometry.  The problem of finding the largest copy of a polygon inside another is interesting, and many algorithms have been constructed to solve instances of this problem \cites{Amenta:1994cy, Agarwal:1998fx, Cabello:2016bi, Hallholt:2006io}.  Theorem \ref{thm:epsilonapprox} shows that the problem of finding an $\varepsilon$-approximation of the largest scaled isometric copy of a polytope $K$ inside another polytope $P$ is a linear-programming type (LP-type) problem.  Therefore, it can be solved by randomized algorithms in expected linear time in terms of the number of facets of $P$ (with hidden factors depending on $K$ and $\varepsilon$, which we assume are fixed).

  As an example, consider the problem of finding the largest volume hypercube contained inside a polytope $P \subset \rr^d$ (we consider a hypercube as an isometric scaled copy of $[0,1]^d$).  If we insist that the hypercube is axis-parallel, the results of Sarkar et a. show that this is an LP-type problem \cite{Sarkar:2019tp}.  If we allow it to have any orientation, the results here show that \textit{approximating} the largest hypercube is an LP-type problem, but there is no associated Helly theorem for an exact computation.

We set preliminaries and notation in Section \ref{sec:prelim} and prove Theorems \ref{thm:isometric-no} and \ref{thm:affine-no} in Section \ref{sec:negative-results}.  We prove Theorem \ref{thm:epsilonapprox} in Section \ref{sec:positive-results}, where we also discuss the computational applications.  Finally, some future directions of research are presented in Section \ref{sec:future-work}.

\section{Preliminaries and notation}\label{sec:prelim}

Let $B_d$ be the unit ball in $\rr^d$, and let $S^{d-1}$ be its boundary. Further, let $O(d)$ denote the group of orthogonal $d\times d$ matrices, and $SL(d)$ denote the group of $d\times d$ matrices of determinant $1.$ We denote by $\mu$ the Haar probability measure on $S^{d-1}$, which is invariant under $O(d)$.  We denote by $\langle \cdot, \cdot \rangle$ the standard dot product in $\rr^d$.

For a unit vector $u$, we say that $D(u) \subset S^{d-1}$ is the half-sphere with direction $u$ if $D(u) = \{x \in S^{d-1}: \langle x, u\rangle \ge 0\}$.  For $\varepsilon > 0$, we define the $\varepsilon$-neighborhood of $D(u)$ as the set $D(u)_{\varepsilon} = \{x \in S^{d-1}: \langle x, u\rangle > -\varepsilon \}$.

For a unit vector $u$, we denote by $H_u$ the closed half-space that contains $B_d$ and whose boundary contains $u$.  In other words
\[
H_u = \{x \in \rr^d : \langle x, u \rangle \le 1\}.
\]

For a compact set $M \subset B_d$ and $\lambda > 0$ we define a family of half-spaces
\[
\ff(M,\lambda) = \{H_u : \operatorname{dist}(u, M) \ge \lambda\}.
\]

The distance above is computed using the Euclidean distance in $\rr^d$.  Given a $d\times d$ matrix $A$ and a set $K \subset \rr^d$, we denote by $AK$ the set $\{Ax : x \in K\}$.  We can parametrize $\iso{K}$ by triples $(a,\alpha, A) \in \rr^d \times \rr \times O(d)$.  The triple $(a, \alpha, A)$ corresponds to the set $a+\alpha AK$.  A set may be represented many times if $K=AK$ for more than one matrix $A$ in $O(d)$ or $SL(d)$.  For a compact set $M \subset \rr^d$ we define
\[
S(M) = \{(a,\alpha, A)\in \rr^d \times \rr \times O(d) : a+\alpha A K \subset M\}.
\]

By checking subsequences it is simple to note that if $M$ is compact, $S(M)$ is compact.

\begin{definition}
	Let $\mathcal{W}$ be a family of sets in $\rr^d$.  We denote by $h(\mathcal{W})$  the smallest positive integer $n$, if it exists, such that the following holds.  For any finite family of convex sets in $\rr^d$, if the intersection of $n$ or fewer of them contains a set of $\mathcal{W}$, then the intersection of the whole family contains a set of $\mathcal{W}$.  If no such $n$ exists, we say $h(\mathcal{W}) = \infty$.
\end{definition}

We say that $h(\mathcal{W})$ is the Helly number for $\mathcal{W}$.  The family $\mathcal{W}$ admits an exact Helly theorem for the volume if $h(\mathcal{W}') < \infty$ for $\mathcal{W}' = \{W \in \mathcal{W}: \operatorname{vol}(W)\ge 1 \}$.

\section{Isometric and affine copies}\label{sec:negative-results}

In order to prove Theorem \ref{thm:isometric-no}, it suffices to construct for each positive integer $n$ a finite family of convex sets whose intersection does not contain an element of $\iso{K}$ of volume $1$, but the intersection of any $n$ sets does.

\begin{lemma}\label{lem:rotation}
	Let $n, d$ be positive integers.  Let $K \subset B_d$ be a compact set such that $\mu(K\cap S^{d-1}) < 1/n$.  There exists a positive constant $\delta = \delta(n,d, K)>0$ such that the following holds.  For any set $L$ of at most $n$ points in $S^{d-1}$, there exists $A \in O(d)$ such that
	\[
	\operatorname{dist}(L, AK) \ge \delta.
	\]
\end{lemma}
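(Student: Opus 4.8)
The plan is to establish the existence of a suitable rotation for each fixed $L$ through a short probabilistic argument, and then to extract a single $\delta$ that works for all $L$ simultaneously by a compactness argument on the space of configurations.

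First I would reformulate the incidence condition. Since every $A \in O(d)$ preserves $S^{d-1}$, for a point $\ell \in S^{d-1}$ we have $\ell \in AK$ if and only if $A^{-1}\ell \in K$, and as $A^{-1}\ell \in S^{d-1}$ this is equivalent to $A^{-1}\ell \in K \cap S^{d-1}$. Now sample $A$ according to the Haar probability measure on $O(d)$. For each fixed $\ell \in S^{d-1}$, the point $A^{-1}\ell$ is distributed according to $\mu$ on $S^{d-1}$, because the pushforward of Haar measure under $A \mapsto A^{-1}\ell$ is $O(d)$-invariant (by right-invariance of Haar) and hence equals the unique invariant probability measure $\mu$. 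Therefore $\Pr[\ell \in AK] = \mu(K \cap S^{d-1}) < 1/n$. Writing $L = \{\ell_1, \dots, \ell_m\}$ with $m \le n$ and applying the union bound,
\[
\Pr[\, L \cap AK \ne \emptyset \,] \le \sum_{i=1}^m \Pr[\ell_i \in AK] < m \cdot \frac 1n \le 1,
\]
so with positive probability $L \cap AK = \emptyset$. In particular, for every such $L$ there is some $A \in O(d)$ with $L \cap AK = \emptyset$; since $L$ is finite and $AK$ is compact, two disjoint compact sets have positive distance, so $\operatorname{dist}(L, AK) > 0$ for this $A$.

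This only gives a positive distance for each individual $L$, with no control as $L$ varies; securing a uniform $\delta$ is the crux of the argument. I would parametrize configurations by tuples $(\ell_1, \dots, \ell_n) \in (S^{d-1})^n$, allowing repetitions so that every set of at most $n$ points is represented, and define
\[
f(\ell_1, \dots, \ell_n) = \max_{A \in O(d)} \; \min_{1 \le i \le n} \operatorname{dist}(\ell_i, AK).
\]
The map $(\ell, A) \mapsto \operatorname{dist}(\ell, AK)$ is continuous (the set $AK$ varies continuously with $A$ in the Hausdorff metric, and distance to a set is continuous in both the base point and Hausdorff perturbations), the inner minimum over $i$ is continuous, and since $O(d)$ is compact the maximum over $A$ is attained and $f$ is continuous on $(S^{d-1})^n$. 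By the previous paragraph $f$ is strictly positive at every point. As $(S^{d-1})^n$ is compact, $f$ attains a positive minimum, and I would set $\delta$ equal to this minimum; by construction $\delta = \delta(n, d, K) > 0$, and for every set $L$ of at most $n$ points there is $A \in O(d)$ with $\operatorname{dist}(L, AK) \ge \delta$.

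The main obstacle is the passage from the pointwise positivity delivered by the probabilistic step to a uniform lower bound, and the load-bearing facts are the continuity of the maximum-over-rotations functional $f$ together with the compactness of the configuration space $(S^{d-1})^n$; the device of allowing repeated coordinates is what lets a single compact space capture all configurations of at most $n$ points. I expect the only routine care needed elsewhere is the verification that the Haar pushforward under $A \mapsto A^{-1}\ell$ is the uniform measure $\mu$ and the joint continuity of $(\ell, A) \mapsto \operatorname{dist}(\ell, AK)$. A more hands-on alternative would bound directly the Haar measure of rotations with $\operatorname{dist}(\ell, AK) < \delta$ by analyzing the shell $K \cap \{\,|x| > 1 - \delta\,\}$ and letting $\delta \to 0$, but this requires an upper semicontinuity estimate on measures and is messier than the compactness route.
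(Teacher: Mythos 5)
Your proposal is correct and follows essentially the same route as the paper's proof: a union bound over the $n$ points for a Haar-random $A \in O(d)$ to get pointwise positivity, followed by continuity and compactness of the configuration space $(S^{d-1})^n$ to extract a uniform $\delta$ (your $\max_A \min_i \operatorname{dist}(\ell_i, AK)$ is exactly the paper's $\max_A \operatorname{dist}(\{x_1,\ldots,x_n\}, AK)$). The extra details you supply — the pushforward identity for the Haar measure and the joint continuity of $(\ell, A) \mapsto \operatorname{dist}(\ell, AK)$ — are correct verifications of steps the paper leaves implicit.
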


\begin{proof}
	Let $(x_1, \ldots, x_n) \in (S^{d-1})^n$ be an $n$-tuple of points in $S^{d-1}$.  If we pick a random matrix $A \in O(d)$, then the probability
	\[
	\mathds{P}[x_1 \in AK] = \mu (K \cap S^{d-1}) < \frac{1}{n}.
	\]
	By a simple union bound, there exists a matrix $A \in O(d)$ such that none of $x_1, \ldots, x_n$ are contained in $AK$.  Let
	\begin{align*}
		f:\left(S^{d-1}\right)^n & \to \rr \\
		(x_1, \ldots, x_n) & \mapsto \max_{A \in O(d)} \operatorname{dist}\left(\{x_1,\ldots, x_n\}, AK\right).
	\end{align*}
	We know that $f(x_1,\ldots, x_n) > 0$ for all $(x_1, \ldots, x_n) \in \left(S^{d-1}\right)^n$.  The function $f$ is also continuous.  Since the domain is compact, the function attains a minimum value $\delta$.  This is the constant we were looking for.
\end{proof}

\begin{lemma}\label{lem:almost-optimal}
	Let $K \subset B_d$ and $n$ be a positive integer.  Suppose there exists a half-sphere $D \subset S^{d-1}$ such that $\mu (K \cap D) < 1/n$.  Then, there exists a positive constant $\delta_2 = \delta_2 (n,d,K)>0$ such that for any collection $H_1, \ldots, H_n$ of $n$ half-spaces, each containing the unit ball, their intersection contains an isometric copy of $K$ scaled by a factor of $1+\delta_2$.
\end{lemma}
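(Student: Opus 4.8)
The plan is to reduce to tangent half-spaces, use a probabilistic rotation argument in the spirit of Lemma \ref{lem:rotation} to clear the directions lying in a slight enlargement of the good half-sphere, and then translate the rotated copy into that half-sphere before scaling. First I would observe that any half-space containing $B_d$ contains a tangent one: if $H = \{x : \langle x, v\rangle \le c\}$ with $|v| = 1$ and $B_d \subset H$, then $c \ge 1$, so $H_v \subseteq H$. Hence $\bigcap_i H_{u_i} \subseteq \bigcap_i H_i$ for suitable unit vectors $u_i$, and it suffices to fit the enlarged copy inside the intersection of the tangent half-spaces $H_{u_1}, \dots, H_{u_n}$. Writing the given half-sphere as $D = D(w_0)$, the relevant quantities are the support functions $h_{AK}(u) = \max_{x \in AK}\langle x, u\rangle \le 1$; note that $\operatorname{dist}(u, AK) \ge \lambda$ forces $h_{AK}(u) \le 1 - \lambda^2/2$, since a maximizer $x^\ast$ satisfies $\langle x^\ast, u\rangle = \tfrac12(|x^\ast|^2 + 1 - |x^\ast - u|^2) \le 1 - \lambda^2/2$.

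Next I would set up the probabilistic step. Fix $\lambda, \gamma > 0$. For a Haar-random $A \in O(d)$ and a fixed $u_i$, the vector $A^{-1}u_i$ is uniform on $S^{d-1}$, and the conditions $\langle u_i, Aw_0\rangle > -\gamma$ and $\operatorname{dist}(u_i, AK) < \lambda$ become $A^{-1}u_i \in D(w_0)_\gamma$ and $A^{-1}u_i \in K_\lambda$, where $K_\lambda$ is the $\lambda$-neighborhood of $K$. Thus the probability that both hold equals $\mu(K_\lambda \cap D(w_0)_\gamma)$. Since $\mu(K \cap D(w_0)) < 1/n$ and $K_\lambda \cap D(w_0)_\gamma$ decreases to $K \cap D(w_0)$ as $\lambda, \gamma \to 0$, continuity of the measure lets me choose $\lambda, \gamma$ (depending only on $n, d, K$) so that this probability is $< 1/n$. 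A union bound over the at most $n$ indices then yields a single $A \in O(d)$ such that, with $w := Aw_0$, every $u_i$ with $\langle u_i, w\rangle > -\gamma$ satisfies $\operatorname{dist}(u_i, AK) \ge \lambda$. Crucially, the probability bound does not involve the $u_i$, so $\lambda, \gamma$ are uniform over all configurations; this is what ultimately produces a configuration-independent $\delta_2$.

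Finally I would exhibit the copy $sw + (1+\delta)AK \in \iso{K}$ and check it lies in every $H_{u_i}$, i.e. that $s\langle w, u_i\rangle + (1+\delta)h_{AK}(u_i) \le 1$, in two regimes. For directions with $\langle u_i, w\rangle > -\gamma$ I have $h_{AK}(u_i) \le 1 - \lambda^2/2$, so the inequality holds once $s + \delta$ is small. For directions with $\langle u_i, w\rangle \le -\gamma$ the translation does the work: $s\langle w, u_i\rangle \le -s\gamma$ compensates the factor $(1+\delta)h_{AK}(u_i) \le 1 + \delta$ as soon as $\delta \le s\gamma$. Choosing $s$ small and then $\delta = \delta_2(n,d,K) \le s\gamma$ meets both constraints with $\delta > 0$, giving the required scaled copy.

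I expect the main obstacle to be the near-equator directions. A single translation cannot create room against half-spaces whose normals point in all directions, and the $u_i$ nearly orthogonal to $w$ are helped by neither the scaling nor the translation. The device that overcomes this is to clear, via the rotation, not merely $D(w_0)$ but its $\gamma$-neighborhood $D(w_0)_\gamma$; the strict inequality $\mu(K \cap D) < 1/n$ supplies exactly the slack needed to absorb this enlargement while keeping each bad-event probability below $1/n$, after which the single translation direction $w$ disposes of all remaining (genuinely low) directions at once.
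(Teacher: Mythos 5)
Your argument is correct and follows essentially the same route as the paper's: reduce to tangent half-spaces, use the Haar measure of $O(d)$ with a union bound to find a rotation under which no contact point lying in a slightly enlarged copy of $D$ comes within $\lambda$ of the rotated $K$, and then translate toward the pole of $D$ and dilate. The only difference is in the final step, where the paper extracts the uniform $\delta_2$ from a compactness argument on the fixed body $Q=\bigcap\ff(K_{\varepsilon_1},\delta)$, while you obtain it from the explicit support-function bounds $h_{AK}(u_i)\le 1-\lambda^2/2$ and the gain $s\gamma$ from the translation; both are valid, and yours makes the dependence of $\delta_2$ on $\lambda$ and $\gamma$ explicit.
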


An intuitive illustration of the proof below is presented in Figure \ref{fig-proof}.

\begin{proof}
	Let $u$ be the unit vector such that $D=D(u)$.  We can find a positive $\varepsilon_1$ such that $\mu(K\cap D_{\varepsilon_1}(u)) < 1/n$.  Let $x_i$ be the contact point of $B_d$ and $H_i$.
	
	Consider the set
	\[
	K_{\varepsilon_1}= \{x \in K: \langle x, u\rangle \ge -\varepsilon_1 \}.
	\]
	  By Lemma \ref{lem:rotation}, we know there exists a $\delta>0$ and an isometry $T$ such that $\operatorname{dist}(\{x_1, \ldots, x_n\},TK_{\varepsilon_1})) \ge \delta$.  Therefore, for each $i$ we have
	  \[
	  H_i \in \ff(TK_{\varepsilon_1}, \delta),
	  \]
	  where $\ff (TK_{\varepsilon_1}, \delta)$ is the family defined  in Section \ref{sec:prelim}.  If we consider $Q = \bigcap \ff( K_{\varepsilon_1}, \delta)$, we have
	  \[
	  \bigcap_{i=1}^n H_i \supset TQ.
	  \]
	  Therefore, it suffices to show that $Q$ contains an isometric copy of $K$ scaled by a factor greater than one.  Let $\lambda = (1/2)\min\{\varepsilon_1, \delta \}$.  The translate $K + \lambda u$ is in the interior of $Q$.  Therefore, there exists a constant $\delta_2>0$ so that $(1+\delta_2)(K+ \lambda u) \subset Q,$ as required.
\end{proof}

\begin{figure}
	\centerline{\includegraphics[scale=0.8]{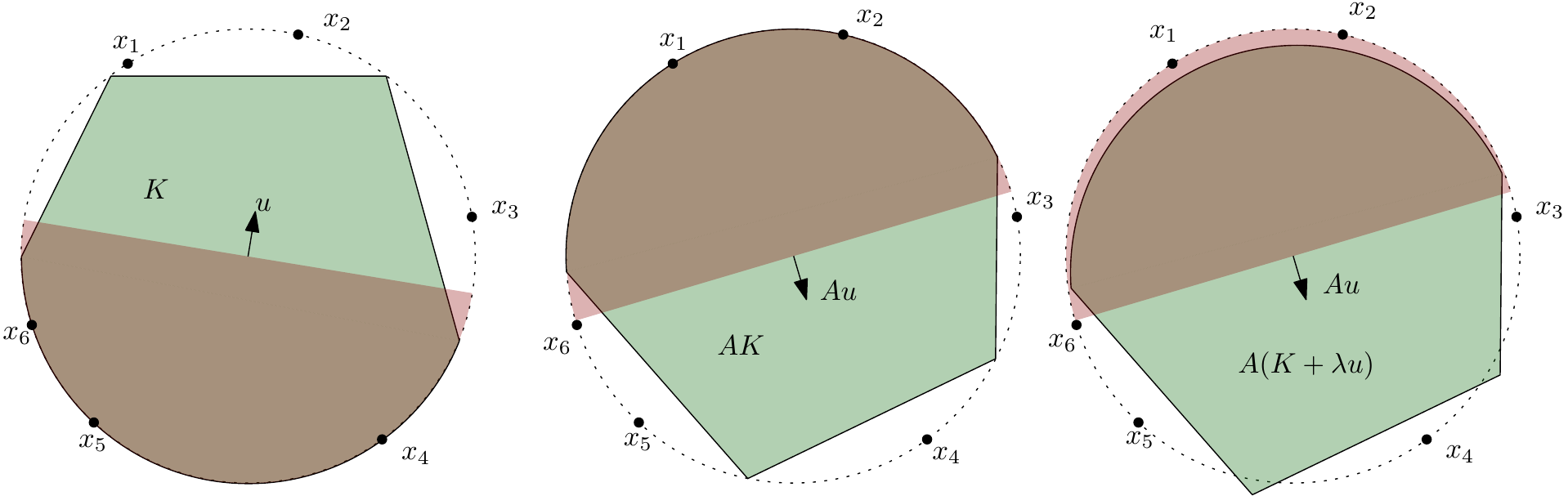}}
	\caption{We show the idea behind the proof of Lemma \ref{lem:almost-optimal}.  The part in red is the complement to $D(u)$.  We rotate $K$ so that $K \cap D$ is far from any $x_i$, and then translate it in the direction of $u$.  At that point, it's possible to scale $K$ up while remaining inside the intersection of the half-spaces tangent at $x_i$ on the sphere.}
	\label{fig-proof}
\end{figure}

\begin{proof}[Proof of Theorem \ref{thm:isometric-no}]
We may assume without loss of generality that $B(K)=B_d$.  For each $u \in S^{d-1}$, let $M_u$ be a simplex that contains $B_d$ and is tangent to $B_d$ at $u$.  Let
\[
\ff = \{M_u : u \in S^{d-1}\}.
\]
For a fixed positive integer $n$, let us use $\ff$ to show that $\iso{K}$ does not admit an exact Helly theorem for the volume.  Take $\delta_2=\delta_2(n(d+1),d,K)$ from Lemma \ref{lem:almost-optimal}.  For each $M_u \in \ff$, let 
\[
S(M_u) = \{(a,\alpha, A) \in \rr^d \times \rr \times O(d): a+\alpha AK \subset M_u, \ |\alpha| = 1+\delta_2 \}.
\]
Since $M_u$ is compact, $S(M_u)$ is compact.  The intersection of any $n$ sets of $\ff$ is a polytope of at most $n(d+1)$ facets that contains $B_d$, and therefore contains an isometric copy of $(1+\delta_2)K$.  In other words, every $n$ sets of the family $\mathcal{G} = \{S(M_u) : M_u \in \ff \}$ have a nonempty intersection.  The family $\mathcal{G}$ has an empty intersection since $\bigcap \ff = B_d$ and, by construction, $B_d$ does not contain a scaled isometric copy of $K$ with a factor greater than $1$.  Since the elements of $\mathcal{G}$ are compact, there must be a finite family $\mathcal{G}'$ whose intersection is empty.  Let $\ff' \subset \ff$ be the family that corresponds to $\mathcal{G}'$.  We know that
\begin{itemize}
	\item $\ff'$ is finite,
	\item $\bigcap \ff'$ does not contain an isometric copy of $K$ scaled by a factor of $1+\delta_2$, and
	\item the intersection of every $n$ or fewer sets of $\ff'$ contains an isometric copy of $K$ scaled by a factor of $1+\delta_2$.
\end{itemize} 
Since we can construct such a family of each positive integer $n$, the family $\iso{K}$ does not admit an exact Helly theorem for volume.
\end{proof}

\begin{proof}[Proof of Theorem \ref{thm:affine-no}]
	We may assume without loss of generality that $B_d$ is the minimum volume ellipsoid containing $K$.  Let $\ff$ be the same family of convex sets as in the proof of Theorem \ref{thm:isometric-no}.  We also denote its elements by $M_u$.  For any affine copy $K'$ of $K$ contained in $B_d$, let us look at the minimum volume ellipsoid $\mathcal{E}(K')$ containing $K'$.  If $\mathcal{E}(K') \neq B_d$, since $K' \subset B_d$, we know $\mathcal{E}(K')$ has a smaller volume than $B_d$.  Therefore, the affine function that sends $\mathcal{E}(K')$ to $B_d$ must increase volume.
	
	In other words, the largest volume that an affine copy of $K$ contained in $B_d$ can have is $\operatorname{vol}(K)$.  The affine transformation associated with an affine copy of $K$ of maximal volume in $B_d$ must preserve $B_d$, and therefore be an isometry.  Let $n$ be a fixed positive integer.  By the arguments of the proof of Theorem \ref{thm:isometric-no}, the intersection of any $n$ or fewer sets of $\ff$ contains a scaled isometric copy of $K$ by a factor of $1+\delta_2$.
	
	Consider the sets of the form
	\[
S(M_u) = \{(a,\alpha, A) \in \rr^d \times \rr \times SL(d): a+\alpha AK \subset M_u, \ |\alpha| = 1+\delta_2 \}.
\]

Even though $SL(d)$ is not compact, every set $S(M_u)$ is compact.  This follows from the fact that $M_u$ is compact, so the set of matrices $A$ in the third coordinate of a point $(a, \alpha, A)$ in $S(M_u)$ is bounded.  The same argument as before allows us to extract a finite subfamily $\ff' \subset \ff$ whose intersection does not contain an affine copy of $K$ of volume $(1+\delta_2)^d \operatorname{vol}(K)$.  However, the intersection of any $n$ sets of $\ff'$ does contain such an affine copy of $K$.  Since this can be done for any $n$, then $\aff{K}$ does not admit an exact Helly theorem for the volume.
\end{proof}

\begin{example}\label{ex:cap}
	Let $C \subset S^{d-1}$ be a circular cap of $S^{d-1}$ of measure greater than $\frac{1}{2}-\frac{1}{n}$.  Consider $K = \operatorname{conv}(S^{d-1}\setminus C)$.  The arguments in the proof of Theorem \ref{thm:isometric-no} show that, if $\iso{K}$ admits an exact Helly theorem for the volume, the Helly number must be at least $n+1$.  See Figure \ref{fig:example} for an illustration in dimension two.
\end{example}

\begin{problem}\label{prob:cap}
For $K$ as in Example \ref{ex:cap}, does $\iso{K}$ admit an exact Helly theorem for the volume?
\end{problem}

\begin{figure}
    \centering
    \includegraphics{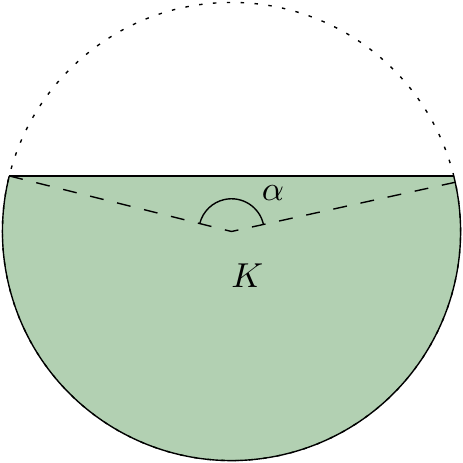}
    \caption{An illustration of Example \ref{ex:cap}.  If $\alpha>\pi-\frac{2\pi}{n}$, then $\iso{K}$ does not admit an exact Helly theorem for the volume with Helly number smaller than $n+1$.}
    \label{fig:example}
\end{figure}

\section{Approximations and computational applications}\label{sec:positive-results}

The results of this section are a consequence of the following simple lemma.

\begin{lemma}\label{lem:additive}
	Let $\mathcal{W}_1$ and $\mathcal{W}_2$ be families of sets in $\rr^d$, each of which has a finite Helly number.  Then, $\mathcal{W}_1 \cup \mathcal{W}_2$ also has a finite Helly number and
	\[
	h(\mathcal{W}_1 \cup \mathcal{W}_2) \le h(\mathcal{W}_1) + h (\mathcal{W}_2).
	\]
\end{lemma}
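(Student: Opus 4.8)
The plan is to argue by contraposition, producing a single small subfamily that simultaneously witnesses the failure of both Helly properties. Write $n_1 = h(\mathcal{W}_1)$ and $n_2 = h(\mathcal{W}_2)$, which are finite by hypothesis, and set $n = n_1 + n_2$. I aim to show that $n$ serves as a Helly number for $\mathcal{W}_1 \cup \mathcal{W}_2$; since $h$ is defined as the \emph{smallest} integer with the Helly property, this immediately yields the desired inequality $h(\mathcal{W}_1 \cup \mathcal{W}_2) \le n$.

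To this end, let $\ff$ be any finite family of convex sets such that the intersection of any $n$ or fewer of its members contains a set of $\mathcal{W}_1 \cup \mathcal{W}_2$. Suppose, toward a contradiction, that $\bigcap \ff$ contains no set of $\mathcal{W}_1 \cup \mathcal{W}_2$. Since a set contains a member of $\mathcal{W}_1 \cup \mathcal{W}_2$ precisely when it contains a member of $\mathcal{W}_1$ or a member of $\mathcal{W}_2$, this assumption says that $\bigcap \ff$ contains neither a set of $\mathcal{W}_1$ nor a set of $\mathcal{W}_2$.

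The key step is to invoke the contrapositive of the defining property of $h$ for each family separately. Because $\bigcap \ff$ contains no set of $\mathcal{W}_1$ while $h(\mathcal{W}_1) = n_1$, the Helly property for $\mathcal{W}_1$ must fail on $\ff$, so there is a subfamily $\ff_1 \subseteq \ff$ with $|\ff_1| \le n_1$ whose intersection contains no set of $\mathcal{W}_1$. Symmetrically, there is a subfamily $\ff_2 \subseteq \ff$ with $|\ff_2| \le n_2$ whose intersection contains no set of $\mathcal{W}_2$.

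Finally I combine these into $\ff_1 \cup \ff_2 \subseteq \ff$, which has at most $n_1 + n_2 = n$ members. By the hypothesis on $\ff$, the intersection $\bigcap(\ff_1 \cup \ff_2)$ contains some $W \in \mathcal{W}_1 \cup \mathcal{W}_2$. If $W \in \mathcal{W}_1$, then since $\bigcap(\ff_1 \cup \ff_2) \subseteq \bigcap \ff_1$ the set $\bigcap \ff_1$ would also contain $W$, contradicting the choice of $\ff_1$; if $W \in \mathcal{W}_2$, the same monotonicity against $\ff_2$ gives the contradiction. Hence $\bigcap \ff$ must contain a set of $\mathcal{W}_1 \cup \mathcal{W}_2$, completing the argument. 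I do not expect a real obstacle in this lemma: the only points requiring care are stating the contrapositive of the definition of $h$ correctly and using the elementary monotonicity that enlarging the index family can only shrink the intersection.
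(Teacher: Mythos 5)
Your proof is correct and is essentially the paper's own argument: both extract, via the contrapositive of the Helly property for each family, subfamilies $\ff_1$ and $\ff_2$ of sizes at most $h(\mathcal{W}_1)$ and $h(\mathcal{W}_2)$ whose intersections miss $\mathcal{W}_1$ and $\mathcal{W}_2$ respectively, and then combine them using monotonicity of intersections. The only cosmetic difference is that you wrap the argument in a proof by contradiction, while the paper states it directly as a contrapositive.
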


\begin{proof}
	We prove the contrapositive.  Let $\ff$ be a finite family of convex sets such that their intersection does not contain a set of $\mathcal{W}_1 \cup \mathcal{W}_2$.  Then, since the intersection does not contain a set of $\mathcal{W}_1$, we can find a subfamily $\ff_1 \subset \ff$ of cardinality at most $h(\mathcal{W}_1)$ whose intersection does not contain a set of $\mathcal{W}_1$.  Analogously, we can find a subfamily $\ff_2 \subset \ff$ of cardinality at most $h(\mathcal{W}_2)$ whose intersection does not contain an element of $\mathcal{W}_2$.  The family $\ff_1 \cup \ff_2$ has at most $h(\ww_1) + h(\ww_2)$ elements, and its intersection contains no element of $\ww_1 \cup \ww_2$.
\end{proof}

\begin{lemma}\label{lem:few-directions}
	Let $K$ be a compact convex set in $\rr^d$ with a nonempty interior, and $\varepsilon > 0$ be a constant.  Then, we can find a positive integer $t$ and $t$ matrices $A_1, \ldots, A_t$ in $O(d)$ such that every isometric copy of $K$ contains a translate of one of the sets $(1-\varepsilon)A_i K$.
\end{lemma}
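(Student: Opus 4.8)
The plan is to reduce the statement to a covering (net) argument on the compact group $O(d)$, using that two orientations which are close in operator norm produce copies of $K$ that are close in Hausdorff distance, together with the fact that a centrally shrunk copy of a convex body sits \emph{strictly} inside the body with a quantitative margin. First I would record two constants coming from compactness and the nonempty interior of $K$: a point $c$ and a radius $r>0$ with $B(c,r)\subset K$, and the number $M:=\max_{x\in K}\|x\|<\infty$. The core geometric input is the margin estimate. Writing $T:=c+(1-\varepsilon)(K-c)$, a translate of $(1-\varepsilon)K$, I claim that $T\oplus B(0,\varepsilon r)\subset K$. Indeed $B(0,r)=B(c,r)-c\subset K-c$, so for any $v'\in K-c$ and $w\in B(0,r)$ convexity of $K-c$ gives $(1-\varepsilon)v'+\varepsilon w\in K-c$; translating by $c$ yields the displayed inclusion. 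Since each $A_i\in O(d)$ is an isometry fixing $B(0,\varepsilon r)$, it transports this to
\[
A_iT\oplus B(0,\varepsilon r)\subset A_iK,
\]
and $A_iT$ is a translate of $(1-\varepsilon)A_iK$.

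Next I would quantify closeness of orientations. For $A,A'\in O(d)$ and $x\in K$ we have $\|Ax-A'x\|=\|(A-A')x\|\le \|A-A'\|_{\mathrm{op}}\,M$, so the Hausdorff distance obeys $d_H(AK,A'K)\le \|A-A'\|_{\mathrm{op}}\,M$, and in particular $A_iK\subset AK\oplus B(0,\|A-A_i\|_{\mathrm{op}}M)$. Combining this with the margin estimate gives
\[
A_iT\oplus B(0,\varepsilon r)\ \subset\ A_iK\ \subset\ AK\oplus B(0,\|A-A_i\|_{\mathrm{op}}M).
\]
If $\|A-A_i\|_{\mathrm{op}}\le \varepsilon r/M$, then the left margin dominates the right one, and the inclusion of the two convex bodies $A_iT$ and $AK$ can be read off from support functions: the inclusion $X\subset Y$ of convex sets is equivalent to $\sup_{x\in X}\langle x,u\rangle\le\sup_{y\in Y}\langle y,u\rangle$ for every unit $u$, and forming the Minkowski sum with $B(0,s)$ adds exactly $s$ to each side, so the displayed containment with $\varepsilon r\ge\|A-A_i\|_{\mathrm{op}}M$ forces $A_iT\subset AK$. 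Thus whenever $A$ lies within operator-norm distance $\varepsilon r/M$ of some $A_i$, the copy $AK$ contains the translate $A_iT$ of $(1-\varepsilon)A_iK$.

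Finally, since $O(d)$ is compact it has a finite $(\varepsilon r/M)$-net $A_1,\dots,A_t$ in the operator norm. Given an arbitrary isometric copy $a+AK$, choose $A_i$ with $\|A-A_i\|_{\mathrm{op}}\le \varepsilon r/M$; by the previous step $AK$, and hence its translate $a+AK$, contains a translate of $(1-\varepsilon)A_iK$. These $A_1,\dots,A_t$ are the required matrices. The step I expect to be the main obstacle is not the net argument, which is routine once the constant $\varepsilon r/M$ is uniform in the orientation (all $A_iK$ are isometric, so $r$ and $M$ do not change), but rather passing rigorously from the two soft ingredients—a fixed interior margin plus Hausdorff closeness—to an honest containment of one convex body inside another; the support-function cancellation, valid precisely because $\varepsilon r\ge \|A-A_i\|_{\mathrm{op}}M$, is what makes this work and is also where convexity of $K$ is essential.
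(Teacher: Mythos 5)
Your proof is correct, and it rests on the same two pillars as the paper's: the compactness of $O(d)$ and the fact that $(1-\varepsilon)K$ sits inside $K$ with a positive margin. The execution differs in a way worth noting. The paper argues softly: it defines $f(A)=\max_{a}\operatorname{dist}(a+A((1-\varepsilon)K),\overline{K^c})$, observes that $M=f^{-1}((0,\infty])$ is an open neighborhood of the identity, and covers $O(d)$ by the group translates $M_A=\{BA^{-1}:B\in M\}$, extracting a finite subcover; this gives existence of $t$ with no control on its size. You instead make every step quantitative: the margin $T\oplus B(0,\varepsilon r)\subset K$ from convexity and an inscribed ball $B(c,r)$, the Lipschitz estimate $d_H(AK,A'K)\le\|A-A'\|_{\mathrm{op}}M$, and the support-function cancellation $h_{A_iT}+\varepsilon r\le h_{AK}+\|A-A_i\|_{\mathrm{op}}M$ forcing $A_iT\subset AK$ once $\|A-A_i\|_{\mathrm{op}}\le\varepsilon r/M$ (valid since $AK$ is compact convex, so the support-function inequality is equivalent to containment). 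A finite $(\varepsilon r/M)$-net of $O(d)$ then does the job. What your version buys is an explicit modulus, hence an explicit upper bound on $t$ in terms of the covering numbers of $O(d)$ at scale $\varepsilon r/M$ --- directly relevant to the open problem the paper poses at the end of Section \ref{sec:positive-results} about computing the smallest such $t$, and to the combinatorial complexity $t(d+1)$ of the resulting LP-type problem. What the paper's version buys is brevity and no need for the Hausdorff/support-function bookkeeping. Both are complete proofs.
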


\begin{proof}
Let $\overline{K^c}$ be the closure of the complement of $K$.  We assume without loss of generality that the origin is in the interior of $K$.  Then, $(1-\varepsilon)K$ is contained in the interior of $K$, so 
	\[
	\operatorname{dist}((1-\varepsilon)K,\overline{K^c}) > 0.
	\]
	
	Consider the function
	\begin{align*}
		f:O(d) & \to \rr \\
		A & \mapsto \max_{a \in \rr^d} \operatorname{dist}(a+A((1-\varepsilon)K), \overline{K^c}).
	\end{align*}
	
	This function is continuous.  The set $M= f^{-1}((0,\infty]) \subset O(d)$ is open and contains the identity.  For each $A \in O(d)$, consider the set 
	\[
	M_A = \{BA^{-1} : B \in M\}.
	\]
	The family $\mathcal{M} = \{M_A : A \in O(d)\}$ is an open cover of $O(d)$.  Since $O(d)$ is compact, there exists a finite collection $A_1, \ldots, A_t$ of matrices in $O(d)$ such that $M_{A_1}, \ldots, M_{A_t}$ cover $O(d)$.  Let $DK$ be an isometric copy of $K$, for some $D \in O(d)$.  Since $D^{-1} \in O(d)$, there is an $A_i$ such that $D^{-1} \in M_{A_i}$.
	
	In other words, $D^{-1}A_i \in M$.  Therefore, there exists an $a \in \rr^d$ such that
	\[
	\operatorname{dist}(a+D^{-1}A_i((1-\varepsilon)K), \overline{K^c})>0,
	\]
	which is equivalent to
	\[
	\operatorname{dist}(Da+A_i((1-\varepsilon)K), D\overline{K^c})>0.
	\]
	Finally, this means that there is a translate of $A_i((1-\varepsilon)K)$ contained in the interior of $DK$.
	\end{proof}
	
Given two compact convex sets $K$ and $P$ in $\rr^d$, an interesting problem is to find the largest scaled isometric copy of $K$ contained in $P$.  If $K$ is fixed and $P$ is a polytope with $n$ facets, we would like to know the complexity of solving this problem in terms of $n$.  Formally, we want to compute the constant
\[
\alpha(K,P) = \max \{\alpha : a + \alpha AK \subset P \mbox{ for some }a \in \rr^d, A \in O(d)\}
\]
We show how to use Lemma \ref{lem:few-directions} to find an approximation of this parameter.  First, if we are given a particular set $A_1, \ldots, A_t$ of matrices in $O(d)$ we can define a similar parameter
\[
\beta(K,P) = \max \{\alpha : a + \alpha A_iK \subset P \mbox{ for some }a \in \rr^d, 1 \le i \le t\}.
\]

\begin{theorem}
	Let $K$ be a compact convex set in $\rr^d$ with a nonempty interior. Let $A_1, \ldots, A_t$ be matrices in $O(d)$ used to define the parameter $\beta(K, \cdot )$.  For a finite family $\ff$ of convex sets in $\rr^d$, there exists a subfamily $\ff'$ of cardinality $t(d+1)$ such that
	\[
	\beta\left(K,\bigcap \ff\right) = \beta \left(K, \bigcap \ff'\right).
	\]
\end{theorem}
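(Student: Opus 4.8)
The plan is to recognize that, for each fixed orientation $A_i$, the problem of finding the largest scaled translate $a + \alpha A_i K$ contained in a convex set is the maximization of a linear functional over a convex feasible region in $\rr^{d+1}$, and then to apply Helly's theorem in $\rr^{d+1}$ one orientation at a time.

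First I would fix an index $i$ and, for each convex set $C$ in $\ff$, introduce the set
\[
S_i(C) = \{(a,\alpha) \in \rr^d \times \rr : a + \alpha A_i K \subset C\}.
\]
The key preliminary observation is that $S_i(C)$ is convex: if $(a,\alpha)$ and $(a',\alpha')$ lie in $S_i(C)$, then for every $x \in K$ and every $\lambda \in [0,1]$ the point $(\lambda a + (1-\lambda)a') + (\lambda\alpha + (1-\lambda)\alpha')A_i x$ is a convex combination of $a + \alpha A_i x \in C$ and $a' + \alpha' A_i x \in C$, hence lies in $C$. Writing $\beta_i(K, P) = \sup\{\alpha : a + \alpha A_iK \subset P \text{ for some } a \in \rr^d\}$, we then have $\beta_i(K,\bigcap\ff) = \sup\{\alpha : (a,\alpha) \in \bigcap_{C \in \ff} S_i(C)\}$ and $\beta(K,\bigcap\ff) = \max_{1\le i\le t}\beta_i(K,\bigcap\ff)$.

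Next I would reduce each orientation to $d+1$ sets. Assume $\bigcap\ff \neq \emptyset$, the degenerate case being handled separately (if $\bigcap\ff=\emptyset$, Helly yields $d+1$ sets of $\ff$ with empty intersection, on which $\beta$ takes the same value, and one pads to $t(d+1)$). Then $\bigcap_{C\in\ff} S_i(C)$ is nonempty, since it contains $(a,0)$ for any $a \in \bigcap\ff$. If $\beta_i$ is infinite on $\bigcap\ff$, monotonicity makes it infinite on the intersection of any subfamily, so any choice works; otherwise, with $\beta_i^\ast = \beta_i(K,\bigcap\ff) < \infty$, the open halfspace $U = \{(a,\alpha) : \alpha > \beta_i^\ast\}$ is disjoint from $\bigcap_{C\in\ff} S_i(C)$. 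Applying Helly's theorem in $\rr^{d+1}$ to the finite family $\{U\}\cup\{S_i(C) : C\in\ff\}$, some subfamily of $d+2$ sets has empty intersection; since any subfamily omitting $U$ contains the nonempty set $\bigcap_{C\in\ff} S_i(C)$, this subfamily must contain $U$ together with $d+1$ sets $S_i(C)$, $C \in \ff_i'$. Emptiness of $U \cap \bigcap_{C\in\ff_i'} S_i(C)$ forces $\beta_i(K, \bigcap\ff_i') \le \beta_i^\ast$, while the reverse inequality is automatic because $\ff_i' \subset \ff$ enlarges the intersection. Hence $\beta_i(K,\bigcap\ff_i') = \beta_i(K,\bigcap\ff)$ with $|\ff_i'| \le d+1$.

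Finally I would assemble $\ff' = \bigcup_{i=1}^t \ff_i'$, of cardinality at most $t(d+1)$, padding with arbitrary members of $\ff$ to reach exactly $t(d+1)$ (which is harmless, as adding sets of $\ff$ cannot lower any $\beta_i$ below $\beta_i(K,\bigcap\ff)$). For each $i$ the chain $\ff_i' \subset \ff' \subset \ff$ gives $\bigcap\ff \subset \bigcap\ff' \subset \bigcap\ff_i'$, so by monotonicity of $\beta_i$ under inclusion of the container,
\[
\beta_i(K,\bigcap\ff) \le \beta_i(K,\bigcap\ff') \le \beta_i(K,\bigcap\ff_i') = \beta_i(K,\bigcap\ff);
\]
thus $\beta_i$ agrees on $\bigcap\ff$ and $\bigcap\ff'$, and taking the maximum over $i$ yields $\beta(K,\bigcap\ff) = \beta(K,\bigcap\ff')$. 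I expect the main obstacle to be the bookkeeping in the single-orientation reduction, in particular correctly handling the cases where $\bigcap\ff$ is empty or $\beta_i$ is unbounded, and verifying that the separating halfspace $U$ must survive in the Helly subfamily, rather than the convexity check or the final combination, which are routine.
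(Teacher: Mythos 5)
Your proof is correct, and its skeleton is the same as the paper's: decompose $\beta$ as $\max_i \beta_i$ over the $t$ orientations, extract $d+1$ sets of $\ff$ per orientation, and take the union. The difference is in how the per-orientation step is justified. The paper simply invokes ``Helly's theorem for translates of a set'' --- the fact that the family $\{a+\beta_i A_i K : a\in\rr^d\}$ has Helly number at most $d+1$ --- as a known black box. You instead prove a slightly stronger statement from scratch by lifting to the configuration space $S_i(C)=\{(a,\alpha): a+\alpha A_iK\subset C\}\subset\rr^{d+1}$, checking convexity, and applying ordinary Helly in $\rr^{d+1}$ to the family augmented by the open halfspace $\{\alpha>\beta_i^\ast\}$; the forced presence of that halfspace in the empty $(d+2)$-subfamily pins down $d+1$ members of $\ff$ on which $\beta_i$ is already correct. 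This buys self-containedness and makes the treatment of the supremum and the degenerate cases ($\bigcap\ff=\emptyset$, $\beta_i$ unbounded) explicit, at the cost of working one dimension up; note that the cited translate version (applied in $\rr^d$ to the sets $\{a: a+\beta_i A_iK\subset C\}$) would give the same count $d+1$, so nothing is lost either way.
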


\begin{proof}
	We use an argument similar to the one in the proof of Lemma \ref{lem:additive}.  For each $i=1,\ldots, t$, define the parameter $\beta_i(K, \cdot)$ as
	\[
	\beta_i(K,P) = \max \{\alpha : a + \alpha A_iK \subset P \mbox{ for some }a \in \rr^d\}.
	\]
	Let $\beta_i = \beta_i(K, \bigcap \ff)$.  By Helly's theorem for translates of a set, we know that the family of witness sets
	\[
	\ww_i = \{a+\beta_i A_i K: a \in \rr^d\}
	\]
	has Helly number at most $d+1$.  Therefore, there is a family $\ff_i \subset \ff$ of size $d+1$ such that $\beta_i( K, \bigcap \ff_i) = \beta_i$.  Since $\beta(K, \cdot) = \max_{1\le i \le t} \beta_i(K, \cdot)$, it suffices to take $\ff' = \bigcup_{i=1}^t \ff_i$ to finish the proof.
\end{proof}

The theorem above shows that computing $\beta (K,P)$ is an LP-type problem.  Consider $d, t$ to be fixed.  To compute $\beta(K,P)$ for a polytope $P$, we first write $P$ as the intersection of $n$ half-spaces, $P = \cap_{i=1}^n H_i$.  A brute-force algorithm would be as follows.  For a fixed $t(d+1)$-tuple $I \subset [n]$, let $P' = \cap_{i\in I} H_i$.  We compute $\beta(K,P')$.  We repeat this for all $\binom{n}{t(d+1)} = O(n^{t(d+1)})$ different $t(d+1)$-tuples of half-spaces, and output the minimum number found.

  We can do better by applying a randomized algorithm, such as the randomized dual-simplex algorithm \cite{Sharir:1992ih} that runs in $O(n)$ expected time.  The parameters $t, d$ affects the hidden constant factor, but not the dependence on $n$.  The algorithms depend on access to an oracle that finds $\beta( K, P')$ when $P'$ is the intersection of $t(d+1)$ half-spaces.  We discuss below why such a computation is possible when $K$ is a polytope.  First, let us show how the computation of $\beta(K,P)$ implies an approximation of $\alpha(K,P)$.

\begin{corollary}
	Let $K$ be a convex polytope in $\rr^d$ whose interior is not empty, and $\varepsilon >0$ be fixed.  There is a randomized algorithm that runs in $O(n)$ expected time such that approximates $\alpha(K,P)$ up to a relative error of $\varepsilon$.
\end{corollary}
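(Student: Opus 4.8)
The plan is to combine the LP-type structure of $\beta(K,\cdot)$ established in the previous theorem with the geometric approximation guarantee of Lemma \ref{lem:few-directions}, so that a fast randomized algorithm for $\beta$ yields a fast approximate algorithm for $\alpha$. First I would invoke Lemma \ref{lem:few-directions} with the given $\varepsilon$ (or a suitable fraction of it) to produce a finite collection of matrices $A_1, \ldots, A_t$ in $O(d)$, where $t=t(K,\varepsilon)$, with the property that every isometric copy of $K$ contains a translate of one of the sets $(1-\varepsilon)A_iK$. Fixing these matrices defines the parameter $\beta(K,\cdot)$, and since $K$ and $\varepsilon$ are fixed, $t$ is a constant independent of $n$.

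Next I would argue the two-sided comparison between $\beta(K,P)$ and $\alpha(K,P)$. On one side, $\beta(K,P) \le \alpha(K,P)$ is immediate, because every copy $a+\alpha A_iK$ witnessing $\beta$ is an admissible scaled isometric copy in the optimization defining $\alpha$. On the other side, let $a+\alpha(K,P)\, AK \subset P$ be an optimal copy for $\alpha$; applying Lemma \ref{lem:few-directions} to the isometric copy $AK$, some translate of $(1-\varepsilon)A_iK$ fits inside $AK$, and hence inside $P$ after the appropriate scaling and translation, giving $\beta(K,P) \ge (1-\varepsilon)\alpha(K,P)$. Together these show $\beta(K,P)$ approximates $\alpha(K,P)$ up to the desired relative error $\varepsilon$.

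For the running time, I would apply the preceding theorem, which shows that computing $\beta(K,\bigcap\ff)$ reduces to a subfamily of size $t(d+1)$, so $\beta(K,\cdot)$ is an LP-type problem of combinatorial dimension $O(t(d+1))$. Writing the polytope $P$ as $\bigcap_{i=1}^n H_i$, the randomized dual-simplex algorithm of \cite{Sharir:1992ih} then computes $\beta(K,P)$ in $O(n)$ expected time, with the hidden constant depending on $t$ and $d$ but not on $n$. The only ingredient the algorithm assumes is an oracle that evaluates $\beta(K,P')$ when $P'$ is an intersection of $t(d+1)$ half-spaces; I expect this to be the main point requiring care. Since $K$ is a polytope and $P'$ is a polytope with a bounded number of facets, computing the largest translate of each $A_iK$ inside $P'$ is a linear program in the scaling factor $\alpha$ and translation vector $a$ (the containment $a+\alpha A_iK \subset P'$ is a finite system of linear inequalities in these variables), and $\beta$ is the maximum over the $t$ choices of $i$; this is exactly the base-case computation the randomized framework requires. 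Combining the approximation estimate with the linear expected running time of the $\beta$-computation completes the proof.
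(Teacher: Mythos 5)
Your proposal is correct and follows essentially the same route as the paper: fix the matrices from Lemma \ref{lem:few-directions}, establish $(1-\varepsilon)\alpha(K,P) \le \beta(K,P) \le \alpha(K,P)$, and compute $\beta(K,P)$ in $O(n)$ expected time via the randomized dual-simplex algorithm using the LP-type structure from the preceding theorem. The extra detail you give about the oracle for $\beta(K,P')$ being a linear program in $(a,\alpha)$ is discussed by the paper outside the proof itself, and your justification of the lower inequality via an optimal copy is the intended (if unstated) argument.
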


\begin{proof}
	Let $A_1, \ldots, A_t$ be the matrices from Lemma \ref{lem:few-directions}.  Then, for any polytope $P$ we have
	\[
	(1-\varepsilon)\alpha(K,P) \le \beta(K,P) \le \alpha(K,P).
	\]
	In other words, $\beta(K,\cdot)$ approximates $\alpha(K,\cdot)$ with a relative error not greater than $\varepsilon$.  We can run the randomized dual-simplex algorithm and find $\beta(K,P)$ in expected $O(n)$ time.
\end{proof}

If $K$ and $P$ are polytopes, we can check if $a+\beta_i K \subset P$ by checking the vertices of $K$ one by one.  A maximal translate of $K$ in $P$ will have contact points with facets of $P$ whose normal vectors capture the origin.

Let us look at the example of approximating the size of the largest equilateral triangle inside a polytope.  The first task, finding the value of $t$, can be done by finding the angle $\alpha$ at which any rotation of an equilateral triangle of side $1-\varepsilon$ fits inside an equilateral triangle of side $1$ (see Figure \ref{fig-triangle}).  We set $t=\pi/\alpha$. The rotations $A_1, \ldots, A_t$ are simply rotations by an angle of $\frac{2\pi j}{t}$ for $j=1,\ldots, t$.  Once $A_1, \ldots, A_t$ are fixed, we follow the algorithms described above.  In the plane, a similar process can be done for any convex polygon $K$.

\begin{figure}
	\centerline{\includegraphics{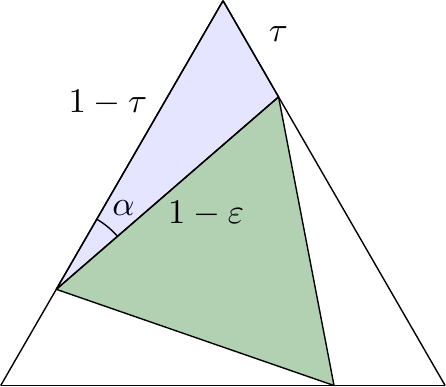}}
	\caption{To find the largest angle $\alpha$ for which a rotated copy of an equilateral triangle of side $1-\varepsilon$ fits inside a side $1$ equilateral triangle, it suffices to use the law of sines twice in the blue triangle and solve for $\alpha$.}
	\label{fig-triangle}
\end{figure}

In high dimensions, the problem of computing $t$ and the matrices $A_1, \ldots, A_t$ is interesting.  If we consider $O(d)$ as a metric space, a sufficiently dense net depending on $\varepsilon$ and $K$ will work.  The precise value of $t$ would not affect the expected time in terms of $n$ for the algorithms mentioned above.  However, those algorithms carry hidden factors in terms of the combinatorial complexity of the LP-type problem, which is $t(d+1)$.  The following problem is relevant.

\begin{problem}
	Given a polytope $K$ in $\rr^d$ and $\varepsilon >0$, compute the smallest value $t$ such that there exist $A_1, \ldots, A_t \in O(d)$ for which any isometric copy of $K$ contains a translate of $A_i((1-\varepsilon)K)$ for some $1\le i \le t$.
\end{problem}

\section{Future Directions of Research}\label{sec:future-work}

In this work, we address the problem of finding for which sets $K$ the collections $\mathcal{W}_{\textrm{iso}}$ and $\mathcal{W}_{\textrm{aff}}$ admit an exact Helly theorem for the volume. Theorem \ref{thm:isometric-no} shows that if, for a given convex set $K,$ $\mathcal{W}_{\textrm{iso}}$ admits an exact Helly theorem for the volume, then $K$ must have a large intersection with its minimal enclosing sphere. If a negative answer to Problem \ref{prob:cap} holds, one may ask the following questions.
\begin{problem}
Is $B_d$ the only set for which $\mathcal{W}_{\textrm{iso}}$ admits an exact Helly theorem for the volume?
\end{problem}
\begin{problem} Are ellipsoids the only sets for which $\mathcal{W}_{\textrm{aff}}$ admits an exact Helly theorem for the volume?
\end{problem}
One may alternatively ask which collections of copies of a given set $K$ admit an exact Helly theorem for the volume. In particular, for a set $K\subseteq \rr^d $ and a subgroup $G < O(d)$, one may ask whether the set 
\[ \mathcal{W}_G = \{a + \alpha AK | A \in G, \alpha \in \rr, a\in \rr^d\}\]
admits an exact Helly theorem for the volume. Theorem \ref{thm:isometric-no} shows that if $G = O(d),$ a negative answer holds unless $K$ is very similar to a ball. Lemma \ref{lem:additive} implies that for any finite subgroup $G$ and any compact set $K$ of positive volume, the set $\mathcal{W}_G$ admits an exact Helly theorem for the volume. For infinite subgroups, the following problem remains open.
\begin{problem}
Given a subgroup $G < O(d),$ for which convex sets $K\subset \rr^d$ does $\mathcal{W}_G$ admit an exact Helly theorem for the volume?
\end{problem}

In particular, does $K$ have to be similar to a $G$-invariant subset of $\rr^d$ as in Theorem \ref{thm:isometric-no}?

\section{Acknowledgments}

The authors thank Edgardo Rold\'an-Pensado for making Figure \ref{fig:circles}.


\begin{bibdiv}
\begin{biblist}

\bib{Agarwal:1998fx}{article}{
      author={Agarwal, Pankaj~K.},
      author={Amenta, Nina},
      author={Sharir, Micha},
       title={{Largest Placement of One Convex Polygon Inside Another}},
        date={1998},
     journal={Discrete {\&} Computational Geometry},
      volume={19},
      number={1},
       pages={95\ndash 104},
}

\bib{Amenta:2017ed}{inproceedings}{
      author={Amenta, Nina},
      author={De~Loera, Jes{\'u}s~A.},
      author={Sober{\'o}n, Pablo},
       title={{Helly{\textquoteright}s theorem: New variations and
  applications}},
      booktitle={Algebraic and geometric methods in discrete mathematics,
  Contemporary Mathematics \bf{685}},
   publisher={American Mathematical Society},
     address={Providence, Rhode Island},
        date={2017},
      volume={685},
      pages = {55\ndash 95},
}

\bib{Amenta:1994cy}{inproceedings}{
      author={Amenta, Nina},
       title={{Bounded boxes, Hausdorff distance, and a new proof of an
  interesting Helly-type theorem}},
        date={1994},
   booktitle={{Proceedings of the tenth annual Symposium on Computational
  Geometry}},
   publisher={ACM Press},
     address={New York, New York, USA},
       pages={340\ndash 347},
}

\bib{Barany:1982ga}{article}{
      author={B{\'a}r{\'a}ny, Imre},
      author={Katchalski, Meir},
      author={Pach, J{\'a}nos},
       title={{Quantitative Helly-type theorems}},
        date={1982},
     journal={Proceedings of the American Mathematical Society},
      volume={86},
      number={1},
       pages={109\ndash 114},
}

\bib{Barany:1984ed}{article}{
      author={B{\'a}r{\'a}ny, Imre},
      author={Katchalski, Meir},
      author={Pach, J{\'a}nos},
       title={{Helly's Theorem with Volumes}},
        date={1984},
     journal={The American Mathematical Monthly},
      volume={91},
      number={6},
       pages={362\ndash 365},
}

\bib{Brazitikos:2016ja}{article}{
      author={Brazitikos, Silouanos},
       title={{Quantitative Helly-Type Theorem for the Diameter of Convex
  Sets}},
        date={2016},
     journal={Discrete {\&} Computational Geometry},
      volume={57},
      number={2},
       pages={494\ndash 505},
}

\bib{Brazitikos:2017ts}{article}{
      author={Brazitikos, Silouanos},
       title={{Brascamp{\textendash}Lieb inequality and quantitative versions
  of Helly's theorem}},
        date={2017},
     journal={Mathematika},
      volume={63},
      number={1},
       pages={272\ndash 291},
}

\bib{Brazitikos:2018uc}{article}{
      author={Brazitikos, Silouanos},
       title={{Polynomial estimates towards a sharp Helly-type theorem for the
  diameter of convex sets}},
        date={2018},
     journal={Bulletin of the Hellenic mathematical society},
      volume={62},
       pages={19\ndash 25},
}

\bib{Cabello:2016bi}{article}{
      author={Cabello, Sergio},
      author={Cheong, Otfried},
      author={Knauer, Christian},
      author={Schlipf, Lena},
       title={{Finding largest rectangles in convex polygons}},
        date={2016},
     journal={Computational Geometry},
      volume={51},
       pages={67\ndash 74},
}

\bib{Damasdi:2019vm}{article}{
      author={Dam{\'a}sdi, G{\'a}bor},
      author={F{\"o}ldv{\'a}ri, Vikt{\'o}ria},
      author={Nasz{\'o}di, M{\'a}rton},
       title={{Colorful Helly-type Theorems for Ellipsoids}},
        date={2019},
     journal={arXiv:1909.04997},
      volume={[math.MG]},
}

\bib{DeLoera:2019jb}{article}{
      author={De~Loera, Jes{\'u}s~A.},
      author={Goaoc, Xavier},
      author={Meunier, Fr{\'e}d{\'e}ric},
      author={Mustafa, Nabil~H.},
       title={{The discrete yet ubiquitous theorems of Carath{\'e}odory, Helly,
  Sperner, Tucker, and Tverberg}},
        date={2019},
     journal={Bulletin of the American Mathematical Society},
      volume={56},
      number={3},
       pages={1\ndash 97},
}

\bib{DeLoera:2017gt}{article}{
      author={De~Loera, Jes{\'u}s~A.},
      author={La~Haye, Reuben~N.},
      author={Rolnick, David},
      author={Sober{\'o}n, Pablo},
       title={{Quantitative Combinatorial Geometry for Continuous Parameters}},
        date={2017},
     journal={Discrete {\&} Computational Geometry},
      volume={57},
      number={2},
       pages={318\ndash 334},
}

\bib{FernandezVidal:2020iw}{article}{
      author={Fernandez~Vidal, Tom{\'a}s},
      author={Galicer, Daniel},
      author={Merzbacher, Mariano},
       title={{Continuos quantitative Helly-type results}},
        date={2020},
     journal={arXiv:2006.09472},
      volume={[math.MG]},
}

\bib{Helly:1923wr}{article}{
      author={Helly, Eduard},
       title={{{\"U}ber Mengen konvexer K{\"o}rper mit gemeinschaftlichen
  Punkte.}},
        date={1923},
     journal={Jahresbericht der Deutschen Mathematiker-Vereinigung},
      volume={32},
       pages={175\ndash 176},
}

\bib{Hallholt:2006io}{inproceedings}{
      author={Hall-Holt, Olaf},
      author={Katz, Matthew~J},
      author={Kumar, Piyush},
      author={Mitchell, Joseph~SB},
      author={Sityon, Arik},
       title={{Finding large sticks and potatoes in polygons}},
        date={2006},
   booktitle={Soda},
       pages={474\ndash 483},
}

\bib{Holmsen:2017uf}{incollection}{
      author={Holmsen, Andreas~F.},
      author={Wenger, Rephael},
       title={{Helly-type theorems and geometric transversals}},
        date={2017},
   booktitle={Handbook of discrete and computational geometry},
       pages={91\ndash 123},
}

\bib{Naszodi:2016he}{article}{
      author={Nasz{\'o}di, M{\'a}rton},
       title={{Proof of a Conjecture of B{\'a}r{\'a}ny, Katchalski and Pach}},
        date={2016},
     journal={Discrete {\&} Computational Geometry},
      volume={55},
      number={1},
       pages={243\ndash 248},
}

\bib{Radon:1921vh}{article}{
      author={Radon, Johann},
       title={{Mengen konvexer K{\"o}rper, die einen gemeinsamen Punkt
  enthalten}},
        date={1921},
     journal={Mathematische Annalen},
      volume={83},
      number={1},
       pages={113\ndash 115},
}

\bib{Sharir:1992ih}{inproceedings}{
      author={Sharir, Micha},
      author={Welzl, Emo},
       title={{A combinatorial bound for linear programming and related
  problems}},
        date={1992},
   booktitle={Annual symposium on theoretical aspects of computer science},
      editor={Finkel, A},
      editor={Jantzen, M},
   publisher={Springer Berlin Heidelberg},
     address={Berlin},
       pages={567\ndash 579},
}

\bib{Sarkar:2019tp}{article}{
      author={Sarkar, Sherry},
      author={Xue, Alexander},
      author={Sober{\'o}n, Pablo},
       title={{Quantitative combinatorial geometry for concave functions}},
        date={2019},
     journal={arXiv:1908.04438},
      volume={[math.CO]},
}

\end{biblist}
\end{bibdiv}

\end{document}